\title[Bracket width of the Lie algebra of vector fields on a smooth affine curve]{Bracket width of the Lie algebra of vector fields on a smooth affine curve}
\author{Ievgen Makedonskyi  and  Andriy Regeta}
\address{\noindent Institut f\"{u}r Mathematik, Friedrich-Schiller-Universit\"{a}t Jena, \newline
\indent  Jena 07737, Germany}
\email{makedonskyi.e@gmail.com, andriyregeta@gmail.com}
\thanks{  }
\newtheorem{claim}{Claim}
\newtheorem{corollary}{Corollary}
\newtheorem{definition and proposition}{Definition and proposition}
\newtheorem{conjecture}{Conjecture}
\newtheorem*{thA}{Theorem A}
\newtheorem*{thB}{Theorem B}
\theoremstyle{definition}
\newtheorem{definition}{Definition}
\newcommand{\Der}{\operatorname{Der}}
\newcommand{\dx}{\frac{\partial}{\partial x}}
\newcommand{\dy}{\frac{\partial}{\partial y}}
\newcommand{\dz}{\frac{\partial}{\partial z}}
\DeclareMathOperator{\Ve}{Vec}
\def\Dz(z-1){\mathrm{D}_{z(z-1)}}
\def \itt #1,#2:{\medskip\item[$\bullet$] 
     page\ \ignorespaces#1, line\ \ignorespaces#2:\ \ignorespaces}
\begin{document}

\begin{abstract}  
We prove that the bracket width of the simple Lie algebra of vector fields $\Ve(C)$ of a smooth irreducible affine curve $C$ with a trivial tangent sheaf is at most three.  In addition, if $C$ is a plane curve, the bracket width of $\Ve(C)$ is at most two and if moreover $C$ has a unique place at infinity, the bracket width of $\Ve(C)$ is exactly two.
 We also show that
 in case $C$ is rational, the width of  $\Ve(C)$ equals one. 
\end{abstract}

\subjclass{14H52, 17B66}
\maketitle


\section{Introduction}
 
 Given a Lie  algebra $L$ over an infinite field $\Bbbk$, we define its bracket width  as the supremum of lengths $\ell (x)$, where $x$ is runs over the derived algebra $[L,L]$ and $\ell (x)$ is defined as the smallest number $n$ of Lie brackets $[y_i,z_i]$ needed to 
represent $x$ in the form 
$$
\sum_{i=1}^n[y_i,z_i]. 
$$
The bracket width applies in studying different aspects of Lie algebras,  see \cite{Rom}. In particular,  in \cite{Rom} the author provides many examples of Lie algebras with the bracket width strictly bigger than one. However,
the first example of a simple Lie algebra with the bracket width strictly bigger than one was found only very recently in  \cite[Theorem A]{DKR21}
among 
 Lie algebras of vector fields of smooth affine curves which are simple by \cite{Jor} and \cite[Proposition~1]{Sie96}. In the current note we provide an upper bound on the bracket width of a Lie algebra  of vector fields
  on an irreducible smooth affine curve $C$ with  certain properties. 
Our first main result is the following statement which partially answers \cite[Question 2]{DKR21}.

\begin{thA}\label{theoremA}
Let $C$ be an irreducible smooth affine curve with  trivial tangent sheaf. Then the bracket width of the Lie algebra $\mathrm{Vec}(C)$ is smaller than or equal to three.  In addition, if $C$ is a plane curve, the bracket width of $\mathrm{Vec}(C)$ is smaller than or equal to two. 
\end{thA}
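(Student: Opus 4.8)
The plan is to reduce the problem to a concrete statement about the ring $A=\O(C)$ and a single derivation. Since the tangent sheaf of $C$ is trivial, $\mathrm{Vec}(C)=\operatorname{Der}_{\Bbbk}(A)=A\,\partial$ for a derivation $\partial$ that vanishes nowhere, and the Lie bracket takes the form $[f\partial,g\partial]=\{f,g\}\,\partial$ with $\{f,g\}:=f\,\partial g-g\,\partial f$. As $\mathrm{Vec}(C)$ is simple, its derived algebra is all of it (e.g. $[\partial,g\partial]=(\partial g)\partial\neq0$ for any non-constant $g$), so the bracket width of $\mathrm{Vec}(C)$ is $\le n$ exactly when every $h\in A$ is a sum of $n$ ``Wronskians'' $\{f_i,g_i\}$. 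I will use repeatedly the elementary identities $\{1,g\}=\partial g$, $\{f,1\}=-\partial f$, $\{f,fg\}=f^2\partial g$ and $\{f,g\}=2f\,\partial g-\partial(fg)$; because of the last one I assume $\operatorname{char}\Bbbk=0$ (or at least large enough for the divisions below), which is in any case implicit in the discussion of tangent sheaves.

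For the bound by three I would fix, once and for all, two functions $g_2,g_3\in A$ whose differentials have no common zero on $C$. This is possible precisely because $\partial$ is nowhere zero: choose a non-constant $g_2$; its critical locus $\{P\in C:\partial g_2(P)=0\}$ is a proper closed, hence finite, subset, and choosing at each such point a function whose $\partial$-derivative does not vanish there and then invoking weak approximation on the Dedekind domain $A$ produces a $g_3$ with $\partial g_3\neq0$ at all of them. Then $(\partial g_2,\partial g_3)=A$, so fix $a_2,a_3\in A$ with $a_2\,\partial g_2+a_3\,\partial g_3=1$. Given $h$, put $b_i:=h a_i$; a routine verification with the identities above gives
$$
h=\Bigl\{-\tfrac12\bigl(g_2 b_2+g_3 b_3\bigr),\,1\Bigr\}+\Bigl\{\tfrac{b_2}{2},\,g_2\Bigr\}+\Bigl\{\tfrac{b_3}{2},\,g_3\Bigr\},
$$
a sum of three Wronskians.

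For a plane curve $C=V(F)\subseteq\mathbb A^2$ one extra idea saves a bracket. Here $A=\Bbbk[x,y]/(F)$, the conormal sequence identifies $\mathrm{Vec}(C)$ with $A\cdot\partial$ for the Hamiltonian field $\partial=\bar F_y\,\partial_x-\bar F_x\,\partial_y$, so $\partial\bar x=\bar F_y$, $\partial\bar y=-\bar F_x$, and smoothness of $C$ gives $(\bar F_x,\bar F_y)=A$. The point I would establish is
$$
(\bar F_y)+\partial(A)=A .
$$
Indeed, modulo $(\bar F_y)$ one has $\partial(\bar x^{\,a}\bar y^{\,b})\equiv-\,b\,\bar x^{\,a}\bar y^{\,b-1}\bar F_x$, so the image of $\partial(A)$ in $A/(\bar F_y)$ contains every $\bar x^{a}\bar y^{c}\bar F_x$, hence all of $(\bar F_x)$ modulo $(\bar F_y)$, which is everything because $(\bar F_x,\bar F_y)=A$. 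Granting this, write $h=2u\,\bar F_y+\partial c$ with $u,c\in A$; then, using $\{u,\bar x\}=2u\,\bar F_y-\partial(\bar x u)$ together with $\partial(\bar x+1)=\partial\bar x$, one checks
$$
h=\bigl\{(1+\bar x)u+c,\ \bar x\bigr\}+\bigl\{-(\bar x u+c),\ \bar x+1\bigr\},
$$
a sum of two Wronskians, which is Theorem~A.

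The heart of the matter, and the step I expect to be most delicate, is the improvement from three to two: it depends on finding a single function $g$ for which the ``companion term'' $g\,\partial f$ of a Wronskian $\{f,g\}$ can be absorbed, equivalently for which $(\partial g)+\partial(A)=A$. For a plane curve this comes for free from the explicit shape of $\partial$ and from $(\bar F_x,\bar F_y)=A$, with $g=\bar x$; for a general curve with trivial tangent sheaf one has to settle for the weaker statement $(\partial g_2,\partial g_3)=A$ for two functions, which is why the general bound is three. I would also have to be careful about small characteristic — both the divisions by $2$ and the surjectivity statements for local derivations need hypotheses there — and I expect to restrict to characteristic zero throughout, double-checking that this matches the standing assumptions of the paper.
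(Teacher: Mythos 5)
Your argument is correct, and it reaches both bounds by a route that is genuinely different from the paper's. The paper proceeds extrinsically: it embeds $C$ as a closed subvariety of $\mathbb{A}^3$ (using Srinivas's sharp bound on the embedding dimension, resp.\ $\mathbb{A}^2$ for a plane curve), lifts $\tau$ to a derivation $\tilde\tau=\tilde P\partial_x+\tilde Q\partial_y+\tilde R\partial_z$ of the polynomial ring preserving the defining ideal, shows by a direct computation that the sums $[\tilde\tau,\tilde f\tilde\tau]+[y\tilde\tau,\tilde g\tilde\tau]+[z\tilde\tau,\tilde h\tilde\tau]$ sweep out exactly $(\tilde P,\tilde Q,\tilde R)\tilde\tau$, and then invokes simplicity of $\Ve(C)$ to conclude that this $\tau$-stable ideal maps onto all of $\mathcal{O}(C)$. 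Your proof is intrinsic: the bracket with $1$ plays the role of the paper's bracket with $\tilde\tau$ itself (absorbing exact derivatives), and in place of the coordinate functions $y,z$ of an ambient embedding you manufacture, by prime avoidance and weak approximation in the Dedekind domain $A$, two functions $g_2,g_3$ with $(\partial g_2,\partial g_3)=A$; the explicit three-Wronskian identity then closes the argument without the embedding theorem and without using simplicity beyond $[L,L]=L$. For the plane case your key lemma $(\bar F_y)+\partial(A)=A$ (proved via the Hamiltonian form of $\partial$ and $(\bar F_x,\bar F_y)=A$) and the pair of brackets against $\bar x$ and $\bar x+1$ replace the paper's pair $[\tilde\tau,\tilde f\tilde\tau]+[y\tilde\tau,\tilde g\tilde\tau]$; I checked both of your displayed identities and they are exact. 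What each approach buys: the paper's is shorter on foundations (simplicity does the generation for free) but leans on the embedding theorem; yours isolates the real mechanism --- on a curve, two functions always suffice to make the derivatives generate the unit ideal, which is why three brackets suffice in general and why the plane (Hamiltonian) case drops to two. Your caveat about characteristic is well taken but harmless here: the paper's own computation also integrates polynomials and divides by $2$, so characteristic zero is a standing assumption in both arguments.
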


The upper bound for the bracket width given in Theorem A is, in particular, of interest since it allows us to compute the bracket width of $\Ve(C)$ for a certain family of smooth plane  affine curves.

\begin{corollary}\label{brackethyperbolic}
Let $C$ be an irreducible non-rational smooth plane affine curve with a unique place at infinity.
Then the width
of the simple Lie algebra $\Ve(C)$  equals two.
\end{corollary}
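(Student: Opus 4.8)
The plan is to establish the two inequalities $w(\Ve(C))\le 2$ and $w(\Ve(C))\ge 2$ separately, writing $w(L)$ for the bracket width of a Lie algebra $L$. The upper bound would come straight from Theorem~A once one knows that a smooth affine plane curve $C=\Spec A$, $A=\C[x,y]/(f)$, has trivial tangent sheaf; this is classical, since the derivation $\delta:=\frac{\partial f}{\partial y}\,\dx-\frac{\partial f}{\partial x}\,\dy$ of $\C[x,y]$ annihilates $f$ and hence descends to $A$, smoothness of $C$ forces $(\frac{\partial f}{\partial x},\frac{\partial f}{\partial y})=A$ so that $\delta$ is a nowhere vanishing vector field on $C$, and then $\Der(A)=A\delta$. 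Thus Theorem~A applies and gives $w(\Ve(C))\le 2$.

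For the lower bound, $\Ve(C)$ being simple equals its own derived algebra, so it suffices to exhibit one element of $\Ve(C)$ that is not a single bracket. First I would fix a free generator $\partial$ of $\Der(A)=A\partial$; as a module generator of the trivial tangent sheaf, $\partial$ is a nowhere vanishing vector field on $C$. Write $\bar C$ for the smooth projective model and $p$ for the unique point with $C=\bar C\setminus\{p\}$, and let $g\ge 1$ be the genus of $\bar C$ (non-rationality of $C$ is used here). Since $\partial$ has neither zeros nor poles on $C$, regarded as a rational section of the tangent sheaf of $\bar C$ it has divisor $\di(\partial)=(2-2g)\,p$. For a nonzero $h\in A$ let $v(h)$ denote its pole order at $p$; the values $v(h)$, $0\ne h\in A$, constitute the Weierstrass semigroup $S$ at $p$, one has $v(h)=0$ exactly when $h$ is constant, and I set $s_1:=\min(S\setminus\{0\})$. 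A computation in a local parameter at $p$, using $\di(\partial)=(2-2g)p$, then shows that $v(\partial h)=v(h)+(2g-1)$ for every non-constant $h\in A$.

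The crux is then to show that every nonzero bracket $[f\partial,g\partial]=(fg'-gf')\partial$, with $':=\partial$, satisfies $v(fg'-gf')\ge s_1+(2g-1)$. I would argue by cases: if one of $f,g$ is constant the bracket is a nonzero scalar multiple of $\partial f$ or $\partial g$, whose pole order is $\ge s_1+(2g-1)$; if $f,g$ are non-constant with $v(f)\ne v(g)$, a comparison of leading coefficients at $p$ shows the leading terms of $fg'$ and $gf'$ do not cancel, so $v(fg'-gf')=v(f)+v(g)+(2g-1)\ge 2s_1+(2g-1)$; and if $v(f)=v(g)$, choosing $\lambda\in\C$ with $v(g-\lambda f)<v(f)$ and observing $fg'-gf'=f(g-\lambda f)'-(g-\lambda f)f'$ reduces matters to the previous cases applied to $f$ and $g-\lambda f$. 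Granting this claim, and using $g\ge 1$ so that $s_1<s_1+(2g-1)$, I would pick $h\in A$ with $v(h)=s_1$: then $h\partial\in\Ve(C)=[\Ve(C),\Ve(C)]$ is not a single bracket, whence $w(\Ve(C))\ge 2$ and, with the upper bound, $w(\Ve(C))=2$. The step I expect to be most delicate is the case $v(f)=v(g)$, where the leading terms of $fg'$ and $gf'$ cancel and one must track orders at $p$ carefully enough to see that the reduction $g\mapsto g-\lambda f$ can never produce a bracket of pole order below $s_1+(2g-1)$; alternatively, the inequality $w(\Ve(C))\ge 2$ can be quoted from \cite[Theorem~A]{DKR21}, whose hypotheses cover the curves considered here.
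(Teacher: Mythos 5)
Your proposal is correct and follows essentially the same route as the paper: the upper bound $\le 2$ comes from Theorem~A after observing that a smooth plane curve has trivial tangent sheaf, and the lower bound $>1$ is exactly \cite[Theorem A]{DKR21}, which is what the paper cites. The extra self-contained valuation argument you sketch for the lower bound (pole orders at the unique place at infinity, $v(\partial h)=v(h)+(2g-1)$, and the case analysis on $v(f),v(g)$) is sound in outline, but it is a reproduction of the proof in \cite{DKR21} rather than a genuinely different approach, and the paper simply quotes that result instead.
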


There are many examples of affine curves with only one place at infinity (see Definition \ref{defoneplace}), and they were
studied in many different contexts, see, e.g., a  paper of Koll\'ar \cite{Kol} and the references
therein. A simple class of examples is given by affine hyperelliptic plane curves $C \subset \mathbb{A}^2$
defined by equations  $y^2 = h(x)$, where $h(x)$ is a  monic polynomial
of odd degree strictly greater than one which has only simple roots (\cite[Example 2]{DKR21}).

We believe that the assumption on the curve $C$ in Corollary \ref{brackethyperbolic} can be  lightened and we have the following conjecture.
\begin{conjecture}
Assume $C$ is a non-rational affine smooth plane curve. Then the bracket width of $\Ve(C)$ is exactly two.
\end{conjecture}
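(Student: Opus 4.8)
A strategy for the conjecture runs as follows. The upper bound is already available: a smooth affine plane curve $C=V(f)\subset\mathbb{A}^2$ carries the Hamiltonian vector field $D:=f_y\,\dx-f_x\,\dy$, which is tangent to $C$ (since $D(f)=0$) and nowhere vanishing on $C$ (since $C$ is smooth), so the tangent sheaf of $C$ is trivial and Theorem~A gives that the bracket width of $\Ve(C)$ is at most two. Hence the conjecture is equivalent to the lower bound: $\Ve(C)$, being simple and therefore perfect, must contain a vector field that is not a single bracket.

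I would exhibit such a field through the trivialization above. Writing each vector field uniquely as $hD$ with $h\in\O(C)$ identifies $\Ve(C)$ with $\O(C)$ under the Poisson-type bracket $h_1D(h_2)-h_2D(h_1)$, and, via the dual $1$-form $\omega$ (determined by $\iota_D\omega=1$), with $\Omega^1_C$ under $hD\mapsto h\omega$; since $dh=D(h)\,\omega$, the Lie bracket becomes
$$[h_1D,h_2D]\ \longmapsto\ h_1\,dh_2-h_2\,dh_1\ =\ h_1^{\,2}\,d\!\left(\tfrac{h_2}{h_1}\right)\ \in\ \Omega^1_C .$$
So bracket width one would mean that every regular differential on $C$ equals $\varphi^{2}\,du$ for some $\varphi\in\O(C)$ and some rational function $u$ with $\varphi u\in\O(C)$. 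I would show this fails for a nonzero holomorphic differential $\omega_0\in H^0(\overline{C},\Omega^1)$, which exists precisely because $C$ is not rational, using that $\omega_0$ is simultaneously non-exact (so $\varphi$ cannot be constant, otherwise $\varphi^{2}\,du$ would be exact) and everywhere regular, that is $\di(\omega_0)\ge 0$. Assume $\omega_0=\varphi^{2}\,du$. Since $\varphi$ and $\varphi u$ lie in $\O(C)$, their poles lie only at the places at infinity; comparing orders at those places in the identity $2\,\di(\varphi)+\di(du)=\di(\omega_0)$, and using the standard relation $\di(du)=R_u-2U_\infty$ between the divisor of $u\colon\overline{C}\to\mathbb{P}^1$ (with ramification divisor $R_u\ge 0$ and polar divisor $U_\infty$) and that of $du$, one finds that $du$ must vanish at each place at infinity where $\varphi$ has a pole, to order at least twice that pole order. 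When $C$ has a \emph{single} place at infinity this already closes the argument: then $u$ is regular at infinity, all its poles lie at the zeros of $\varphi$, so $\deg u\le N$, where $N$ is the pole order of $\varphi$; but $u-u(\infty)$ must vanish at infinity to order at least $2N+1>N$, which is absurd. This is essentially the argument of \cite{DKR21}, and it reproves Corollary~\ref{brackethyperbolic}.

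The main obstacle is the case of several places at infinity $p_1,\dots,p_k$. There $\varphi$ may split its poles among the $p_i$ and may even vanish at some of them, so $u$ can acquire poles at infinity and the count ``$\deg u\le N$'' breaks down; locally at each individual $p_i$ one can arrange $h_1=\varphi$ and $h_2=\varphi u$ to have genuine poles at infinity while $h_1\,dh_2-h_2\,dh_1$ remains regular there. Closing the gap should require an obstruction that does not inspect the $p_i$ one at a time — for instance pairing $\omega_0$ with a well-chosen auxiliary differential and applying the residue theorem on $\overline{C}$, or a Riemann--Roch / Weierstrass-gap estimate bounding, uniformly over all admissible ways of distributing the pole orders among the $p_i$, the order to which $u$ can be constant at a given $p_i$ relative to $\deg u$. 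Ruling out every such distribution is where I expect the difficulty to concentrate; planarity of $C$ should then be needed only to invoke Theorem~A for the upper bound and to make $\O(C)$ and $D$ explicit, the lower bound itself being a statement about arbitrary smooth affine curves of positive genus with trivial tangent sheaf.
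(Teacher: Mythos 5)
The statement you are addressing is stated in the paper as a \emph{conjecture}: the authors do not prove it, and the open content is precisely the lower bound for curves with more than one place at infinity. Your proposal does not close that gap either, and you say so yourself, so what you have is a correct reduction plus a reproof of the already-known cases, not a proof of the conjecture. Concretely: your upper bound is exactly the paper's route (the Hamiltonian field $f_y\dx-f_x\dy$ trivializes the tangent sheaf, then Theorem~A gives width $\le 2$); your translation of ``width one'' into ``every regular differential is $\varphi^2\,du$ with $\varphi,\varphi u\in\O(C)$'' is correct; and your degree count at a unique place at infinity ($\deg u\le N$ versus vanishing of $u-u(\infty)$ to order $\ge 2N+1$) is the argument of \cite{DKR21} underlying Corollary~\ref{brackethyperbolic}. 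None of this is new relative to the paper.

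The genuine gap is the one you name: when there are several places at infinity $p_1,\dots,p_k$, the function $u=h_2/h_1$ may itself have poles at infinity, the inequality $\deg u\le N$ fails, and no global contradiction is extracted. Your suggested remedies (residue pairing on $\overline{C}$, Riemann--Roch estimates uniform over distributions of pole orders among the $p_i$) are plausible directions but are not carried out, and it is exactly at this point that the problem is open --- the paper records it as Conjecture~1 rather than a theorem for this reason. So the proposal should be read as a correct account of what is known (upper bound $\le 2$ for all smooth plane curves, lower bound $\ge 2$ only under the unique-place hypothesis) together with an accurate identification of where the remaining difficulty lies, not as a proof of the conjectured statement.
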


Assume $f$ is a regular function on $C$. We define a principal open subset $C_f \subset C$ as 
\[
\left\{ x \in C \mid f(x) \neq 0 \right\} \subset C.
\]
Note that $C_f$ is a smooth affine curve itself.

\begin{thB} 
Let $C$ be an irreducible smooth affine curve and $C_f$ be its  principal open subset. 
Then the bracket width of  $\mathrm{Vec}(C_f)$ is
smaller than or equal to
 the bracket width of $\mathrm{Vec}(C)$.
\end{thB}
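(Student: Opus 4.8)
The plan is to combine the description of $\mathrm{Vec}(C_f)$ as a localization of $\mathrm{Vec}(C)$ with the observation that, on a curve, the Lie bracket of two vector fields is \emph{quadratically homogeneous} under simultaneous rescaling by a regular function. Write $w$ for the bracket width of $\mathrm{Vec}(C)$; if $w=\infty$ there is nothing to prove, so assume $w<\infty$, and also assume $f$ is neither constant nor a unit (otherwise $C_f=\emptyset$ or $C_f=C$ and the statement is trivial). Since $C_f$ is again a smooth irreducible affine curve, $\mathrm{Vec}(C_f)$ is simple by \cite{Jor,Sie96}, hence perfect, so it is enough to show that every $D\in\mathrm{Vec}(C_f)$ is a sum of at most $w$ brackets of elements of $\mathrm{Vec}(C_f)$.

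Next I would record two facts. First, since $\mathcal{O}(C_f)=\mathcal{O}(C)[1/f]$, every $\Bbbk$-derivation of $\mathcal{O}(C)$ extends uniquely to $\mathcal{O}(C_f)$, and because $\mathcal{O}(C)$ is finitely generated, every derivation of $\mathcal{O}(C_f)$ has the form $f^{-k}D_0$ for some $D_0\in\mathrm{Vec}(C)$ and $k\ge 0$; that is, $\mathrm{Vec}(C_f)=\bigcup_{k\ge0}f^{-k}\mathrm{Vec}(C)$. Second, for any $g\in\mathcal{O}(C_f)$ and $a,b\in\mathrm{Vec}(C_f)$ one has the identity
$$[ga,\,gb]=g^2[a,b]+g\bigl(a(g)\,b-b(g)\,a\bigr),$$
and the correction term vanishes: because $C$ is a curve, any two vector fields are proportional over the function field $\Bbbk(C)$, say $a=\lambda\delta$ and $b=\mu\delta$ with $\lambda,\mu\in\Bbbk(C)$, so that $a(g)\,b(h)-b(g)\,a(h)=\lambda\mu\,\delta(g)\delta(h)-\mu\lambda\,\delta(g)\delta(h)=0$ for every $h$, i.e. $a(g)\,b-b(g)\,a=0$ as a derivation. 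Hence $[ga,gb]=g^2[a,b]$ for all $g\in\mathcal{O}(C_f)$ and $a,b\in\mathrm{Vec}(C_f)$.

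Finally I would assemble the argument. Given $D\in\mathrm{Vec}(C_f)$, write $D=f^{-k}D_0$ with $D_0\in\mathrm{Vec}(C)$; after absorbing one factor of $f$ into $D_0$ when $k$ is odd, we may assume $D=f^{-2m}D_1$ with $D_1\in\mathrm{Vec}(C)$ and $m\ge0$. Applying the bracket-width hypothesis to $D_1$ gives $D_1=\sum_{i=1}^{w}[a_i,b_i]$ with $a_i,b_i\in\mathrm{Vec}(C)$, and then, using the scaling identity with $g=f^{-m}\in\mathcal{O}(C_f)$,
$$D=f^{-2m}D_1=\sum_{i=1}^{w}f^{-2m}[a_i,b_i]=\sum_{i=1}^{w}\bigl[f^{-m}a_i,\,f^{-m}b_i\bigr],$$
where $f^{-m}a_i,f^{-m}b_i\in\mathrm{Vec}(C_f)$. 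Thus $D$ is a sum of at most $w$ brackets, and the bracket width of $\mathrm{Vec}(C_f)$ is at most $w$, as desired.

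The only genuine idea here is the scaling identity $[ga,gb]=g^2[a,b]$, which hinges on $\dim C=1$; everything else — the localization description of $\mathrm{Vec}(C_f)$ and the parity adjustment of the pole order — is routine. The single point that deserves a word of care is the vanishing of the correction term, that is, the fact that $\mathrm{Der}_\Bbbk(\Bbbk(C))$ is one-dimensional over $\Bbbk(C)$, which holds because $C$ is a smooth curve.
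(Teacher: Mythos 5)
Your proof is correct and follows essentially the same route as the paper: both arguments rest on the localization description $\mathrm{Vec}(C_f)=\bigcup_k f^{-k}\mathrm{Vec}(C)$ together with the rank-one scaling identity $[f^{-m}a,\,f^{-m}b]=f^{-2m}[a,b]$, which is exactly the paper's key computation. The only cosmetic difference is that the paper phrases everything in terms of a fixed generator $\tau$ of $\mathrm{Der}(\mathcal{K}(C))$, while you justify the vanishing of the cross terms directly by proportionality of vector fields over $\Bbbk(C)$.
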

We do not know an example of a smooth affine curve $C$ with a principal open subset $U \subset C$ such that the width of $\Ve(U)$ is strictly smaller than $\Ve(C)$.

As a consequence of Theorem B we have the following statement that disproves \cite[Conjecture 1]{DKR21}.

\begin{corollary}\label{bracketrationalcurve}
If $\Bbbk$ is algebraically closed,
the bracket width of the Lie algebra $\mathrm{Vec}(C)$ of a rational smooth affine curve $C$ is one.
\end{corollary}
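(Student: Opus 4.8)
The plan is to reduce the statement, via Theorem B, to the single curve $\mathbb A^1$, and then to check by an explicit computation that $\mathrm{Vec}(\mathbb A^1)$ has bracket width one.

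First I would record the geometric input. Since $\Bbbk$ is algebraically closed and $C$ is rational, smooth and affine, the smooth projective model of $C$ is $\mathbb P^1$, so $C\cong\mathbb P^1\setminus S$ for some nonempty finite set of points $S$. Choosing a point $p\in S$ together with an isomorphism $\mathbb P^1\setminus\{p\}\cong\mathbb A^1$, the curve $C$ is identified with $\mathbb A^1\setminus\{a_1,\dots,a_m\}$ for a finite (possibly empty) subset $\{a_1,\dots,a_m\}\subset\mathbb A^1$; equivalently, $C\cong\mathbb{A}^1_f$ is the principal open subset attached to $f=\prod_{i=1}^m(x-a_i)\in\Bbbk[x]$ (with $f=1$ when $C\cong\mathbb A^1$). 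Theorem B then yields that the bracket width of $\mathrm{Vec}(C)$ is at most the bracket width of $\mathrm{Vec}(\mathbb A^1)$.

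Next I would compute the bracket width of $\mathrm{Vec}(\mathbb A^1)$. Identifying $\mathrm{Vec}(\mathbb A^1)$ with $\Bbbk[x]\,\partial_x$, the Lie bracket is $[u\,\partial_x,\,v\,\partial_x]=(uv'-vu')\,\partial_x$. For an arbitrary element $h\,\partial_x$ I would choose $H\in\Bbbk[x]$ with $H'=h$, which exists because $\Bbbk$ has characteristic zero; then $h\,\partial_x=[\partial_x,\,H\,\partial_x]$. Thus every element of $\mathrm{Vec}(\mathbb A^1)$ is a single bracket, so its bracket width is at most one, and since $\Bbbk[x]\partial_x$ is nonabelian the width is exactly one. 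Combined with the first step, the bracket width of $\mathrm{Vec}(C)$ is at most one.

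Finally, the matching lower bound is automatic: $\mathrm{Vec}(C)$ is simple by \cite{Jor} and \cite[Proposition~1]{Sie96}, hence perfect and nonabelian, so its bracket width is at least one; therefore it equals one, which disproves \cite[Conjecture~1]{DKR21} as claimed. The argument is a formality given Theorem B, and the only feature beyond Theorem B that is genuinely used is algebraic closedness of $\Bbbk$: it is precisely what allows one to realize every rational smooth affine curve as a principal open subvariety of $\mathbb A^1$, so this is the step to be careful about — over a non-closed field such a description can fail (e.g.\ for a conic without rational points), and with it the present proof.
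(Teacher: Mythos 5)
Your proof is correct and follows essentially the same route as the paper: identify $C$ with a principal open subset $\mathbb{A}^1_f$ of $\mathbb{A}^1$ (using algebraic closedness), apply Theorem B, and invoke the width-one property of $\mathrm{Vec}(\mathbb{A}^1)$. The only difference is that you prove the $\mathbb{A}^1$ case directly via the antiderivative computation $h\partial_x=[\partial_x,H\partial_x]$, whereas the paper cites \cite[Proposition 1]{DKR21} for this fact; both are fine.
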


\medskip



 
\section{Proof of Theorem A}

\begin{proof}[Proof of Theorem A]
Denote by $\mathcal{O}(C)$ the ring of regular functions on $C$.
Since by hypothesis the tangent sheaf of $C$ is trivial, we have $\Ve(C) = \mathcal{O}(C) \cdot \tau$ for a certain
nowhere vanishing global vector field $\tau \in \Ve(C)$, unique up to multiplication by a nonzero
constant.
It is well-known that every smooth affine variety of dimension $d$ can be embedded
into $\mathbb{A}^{2d+1}$ and that the bound $2d + 1$ is optimal (\cite[Corollary 1]{Sr91}). In particular, a smooth affine curve can be embedded
into $\mathbb{A}^{3}$ and this bound is sharp. 
Hence, $\mathcal{O}(C) \simeq \Bbbk[x,y,z]/I$, where $I \subset \Bbbk[x,y,z]$ is some ideal and we have the natural surjections
\[\pi\colon\Bbbk[x,y,z] \twoheadrightarrow \Bbbk[x,y,z]/I\]
and 
\[\pi_*\colon \left\{ \nu \in \Der\Bbbk[x,y,z] = \Ve(\mathbb{A}^3) \mid \nu (I)\subset I \right\} \twoheadrightarrow \Ve(C).\] 
Note that $\left\{ \nu \in \Der\Bbbk[x,y,z] \mid \nu (I)\subset I \right\} \subset \Der\Bbbk[x,y,z]$ is a Lie subalgebra and $\pi_*$ is a homomorphism of Lie algebras.
Then
$\tau$ is the image of a derivation $\tilde \tau =\tilde P\dx + \tilde Q \dy + \tilde R\dz$ that preserves the ideal $I$.  Further, define $P,Q,R \in \mathcal{O}(C)$, $P=\pi(\tilde{P})$, $Q=\pi(\tilde{Q})$, $R=\pi(\tilde{R})$.

\begin{claim}\label{claim11}
\[ \left\{ [\tilde\tau,\tilde f\tilde\tau]  
 +  [y\tilde\tau,\tilde g\tilde\tau] +  [z\tilde\tau,\tilde h\tilde\tau] \mid \tilde f,\tilde g, \tilde h \in \Bbbk[x,y,z]  \right\} = (\tilde P,\tilde Q, \tilde R) \tau,
 \]
 where $(\tilde P,\tilde Q, \tilde R)$ denotes the ideal of  $\Bbbk[x,y,z]$ generated by $\tilde P,\tilde Q$ and $\tilde R$.
\end{claim}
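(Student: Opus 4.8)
The plan is to compute the bracket $[\tilde u \tilde\tau, \tilde v \tilde\tau]$ for arbitrary polynomials $\tilde u, \tilde v$ explicitly in terms of $\tilde\tau$ and its coefficients, and then specialize to the three terms appearing in the claim. Recall that $\tilde\tau = \tilde P \dx + \tilde Q\dy + \tilde R\dz$, so for any $\tilde u, \tilde v \in \Bbbk[x,y,z]$ one has the Leibniz computation
\[
[\tilde u\tilde\tau, \tilde v\tilde\tau] = \tilde u\, \tilde\tau(\tilde v)\, \tilde\tau - \tilde v\, \tilde\tau(\tilde u)\, \tilde\tau = \bigl(\tilde u\, \tilde\tau(\tilde v) - \tilde v\, \tilde\tau(\tilde u)\bigr)\tilde\tau.
\]
Applying this with $(\tilde u, \tilde v) = (1, \tilde f)$, $(y, \tilde g)$, $(z, \tilde h)$ respectively, the left-hand side of the claimed equality becomes
\[
\Bigl(\tilde\tau(\tilde f) + y\,\tilde\tau(\tilde g) - \tilde g\,\tilde\tau(y) + z\,\tilde\tau(\tilde h) - \tilde h\,\tilde\tau(z)\Bigr)\tilde\tau = \Bigl(\tilde\tau(\tilde f) + y\,\tilde\tau(\tilde g) - \tilde g\,\tilde Q + z\,\tilde\tau(\tilde h) - \tilde h\,\tilde R\Bigr)\tilde\tau,
\]
using $\tilde\tau(y) = \tilde Q$ and $\tilde\tau(z) = \tilde R$. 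After applying $\pi_*$ and recalling $\pi_*(\tilde\tau) = \tau$, the set in question is $S\cdot\tau$, where $S$ is the set of polynomials of the form $\tilde\tau(\tilde f) + y\,\tilde\tau(\tilde g) + z\,\tilde\tau(\tilde h) - \tilde g\tilde Q - \tilde h\tilde R$ as $\tilde f, \tilde g, \tilde h$ range over $\Bbbk[x,y,z]$. So it remains to show $S = (\tilde P, \tilde Q, \tilde R)$ as a subset of $\Bbbk[x,y,z]$.

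For the inclusion $S \subseteq (\tilde P, \tilde Q, \tilde R)$: each of $\tilde\tau(\tilde f), \tilde\tau(\tilde g), \tilde\tau(\tilde h)$ lies in the ideal $(\tilde P, \tilde Q, \tilde R)$ since $\tilde\tau(\tilde f) = \tilde P\,\partial\tilde f/\partial x + \tilde Q\,\partial\tilde f/\partial y + \tilde R\,\partial\tilde f/\partial z$, and the remaining terms $\tilde g\tilde Q$ and $\tilde h\tilde R$ are obviously in the ideal; hence every element of $S$ lies in $(\tilde P,\tilde Q,\tilde R)$. For the reverse inclusion, I want to produce an arbitrary $\tilde A\tilde P + \tilde B\tilde Q + \tilde C\tilde R$ as an element of $S$. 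Choosing $\tilde g = 0$, $\tilde h = 0$ and $\tilde f = \tilde A x$ is not quite enough since $\tilde\tau(\tilde A x) = \tilde A \tilde P + x\,\tilde\tau(\tilde A)$ carries the unwanted term $x\,\tilde\tau(\tilde A)$; but that error term can be absorbed. The clean way is: to realize $\tilde A\tilde P$, take $\tilde f = \tilde A x$, $\tilde g = -\tilde A y$, $\tilde h = -\tilde A z$, which gives $\tilde\tau(\tilde A x) + y\,\tilde\tau(-\tilde A y) + z\,\tilde\tau(-\tilde A z) - (-\tilde A y)\tilde Q - (-\tilde A z)\tilde R$; expanding, the terms $x\tilde\tau(\tilde A)$, $-y^2\tilde\tau(\tilde A)$, $-z^2\tilde\tau(\tilde A)$ together with $\tilde A x \tilde P_x + \dots$ do not immediately collapse, so a more careful bookkeeping is needed — one checks directly that with $\tilde f = \tilde Ax$, $\tilde g = \tilde A'$, $\tilde h = \tilde A''$ suitably chosen the $\tilde\tau(\tilde A)$-type terms cancel, leaving precisely $\tilde A\tilde P$; similarly $\tilde B\tilde Q$ is obtained by taking $\tilde g$ constant multiple adjustments and $\tilde C\tilde R$ analogously. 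Since $S$ is closed under addition (add the $\tilde f$'s, $\tilde g$'s, $\tilde h$'s), summing the three constructions yields an arbitrary element of $(\tilde P,\tilde Q,\tilde R)$.

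The main obstacle is the second inclusion: showing that the "error terms" $x\,\tilde\tau(\tilde A)$ and their analogues, which appear when one naively tries to hit $\tilde A\tilde P$ by plugging $\tilde f = \tilde Ax$, can be systematically cancelled using the auxiliary parameters $\tilde g, \tilde h$ (and a compensating adjustment to $\tilde f$). Concretely, one should verify the identity
\[
\tilde\tau(\tilde A x) - x\,\tilde\tau(\tilde A) = \tilde A\,\tilde\tau(x) = \tilde A\,\tilde P,
\]
so the task reduces to producing $-x\,\tilde\tau(\tilde A)$ as a contribution; since $-x\,\tilde\tau(\tilde A) = -\tilde\tau(x\tilde A) + \tilde A\tilde P$... — the cleanest route is simply to observe that the subspace $S$ is visibly an $\Bbbk$-subspace (in fact it is exactly the image under $\pi_*^{-1}$-type considerations of a relevant space), that it contains $\tilde P = \tilde\tau(x)$ (take $\tilde f = x$, $\tilde g = \tilde h = 0$), contains $\tilde Q$ (take $\tilde g$ appropriately so that $y\tilde\tau(\tilde g) - \tilde g\tilde Q$ plus $\tilde\tau(\tilde f)$ yields $\tilde Q$, e.g. $\tilde f = y$, $\tilde g = \tilde h = 0$ gives $\tilde\tau(y) = \tilde Q$), and contains $\tilde R$ (take $\tilde f = z$), and then upgrade from "contains the generators" to "contains the whole ideal" by exploiting that $S$ is stable under multiplication by $\tilde\tau(\cdot)$-shifts — i.e. show $\tilde m \in S \Rightarrow \tilde a\tilde m \in S$ for all $\tilde a$, which follows because $S$ absorbs the operators $\tilde f \mapsto \tilde a\tilde f$ up to correction terms that themselves lie in $S$. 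I expect this last "ideal-absorption" step to require the most care and to be where the specific form of the three bracket terms (with multipliers $1, y, z$) is genuinely used.
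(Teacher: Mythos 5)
Your computation of the brackets and the resulting coefficient $\tilde\tau(\tilde f) + y\,\tilde\tau(\tilde g) - \tilde g\tilde Q + z\,\tilde\tau(\tilde h) - \tilde h\tilde R$ is correct, and the easy inclusion $S \subseteq (\tilde P,\tilde Q,\tilde R)$ is fine. But the reverse inclusion --- which is the entire content of the claim --- is never actually proved. You offer three successive sketches (the $\tilde f = \tilde A x$, $\tilde g=-\tilde A y$, $\tilde h = -\tilde A z$ attempt, which you yourself note does not collapse; the ``suitably chosen $\tilde A'$, $\tilde A''$'' variant, which is left unspecified; and the ``contains the generators, then ideal-absorption'' route), and you explicitly flag the last step as the one requiring the most care. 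That last step is not a routine verification: $S$ is a linear subspace, not visibly an ideal, and ``absorbs the operators $\tilde f \mapsto \tilde a \tilde f$ up to correction terms that themselves lie in $S$'' is an assertion, not an argument. Knowing that $S$ contains $\tilde P, \tilde Q, \tilde R$ gives you only the $\Bbbk$-span of the generators, not the ideal, so the proposal as written does not establish the claim.

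The missing idea is a change of variables that decouples the three coefficients. Setting $\tilde r = \tilde f + y\tilde g + z\tilde h$ and using $y\,\tilde\tau(\tilde g) = \tilde\tau(y\tilde g) - \tilde g\tilde Q$ and $z\,\tilde\tau(\tilde h) = \tilde\tau(z\tilde h) - \tilde h\tilde R$, the coefficient becomes
\[
\tilde P\,\tilde r'_x + \tilde Q\,(\tilde r'_y - 2\tilde g) + \tilde R\,(\tilde r'_z - 2\tilde h),
\]
and the map $(\tilde r,\tilde g,\tilde h) \mapsto (\tilde r'_x,\ \tilde r'_y - 2\tilde g,\ \tilde r'_z - 2\tilde h)$ is onto $\Bbbk[x,y,z]^3$: given $(F,G,H)$, take $\tilde r$ an $x$-antiderivative of $F$, then solve for $\tilde g$ and $\tilde h$. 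This is exactly where the multipliers $1, y, z$ in the three brackets are used, and it is the step your proposal circles around without landing on. (A salvageable version of your ``generators plus absorption'' route does exist --- from linearity of $S$ one gets $(\tilde Q) \subseteq S$ and $(\tilde R) \subseteq S$ by subtracting $\tilde\tau(y\tilde g)$ from $y\,\tilde\tau(\tilde g) - \tilde g\tilde Q$, and then $(\tilde P) \subseteq S$ modulo $(\tilde Q)+(\tilde R)$ by integrating in $x$ --- but that argument still has to be written down, and it is essentially the same substitution in disguise.)
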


Indeed,
\begin{eqnarray}\nonumber
& [\tilde\tau,\tilde f\tilde\tau]  
 +  [y\tilde\tau,\tilde g\tilde\tau] +  [z\tilde\tau,\tilde h\tilde\tau] = (\tilde P\tilde f'_x + \tilde Q \tilde f'_y + \tilde R \tilde f'_z + \\ & y(\tilde P \tilde g'_x + \tilde Q \tilde g'_y + \tilde R \tilde g'_z) - \;  
\tilde g \tilde Q +  z(\tilde P \tilde h'_x + \tilde Q \tilde h'_y + \tilde R \tilde h'_z) - \tilde h \tilde R)\tilde\tau 
 \label{sumofthreebrackets}
 \end{eqnarray}
which equals 
\begin{multline}
(\tilde P(\tilde f'_x + y\tilde g'_x + z\tilde h'_x) + \tilde Q (\tilde f'_y + y \tilde g'_y + z \tilde h'_y -\tilde g) + \tilde R(\tilde f'_z + y\tilde g'_z +z\tilde h'_z -\tilde h))\tilde\tau 
 \\ =
 (\tilde P(\tilde f + y\tilde g + z\tilde h)'_x + \tilde Q ((\tilde f + y \tilde g + z \tilde h)'_y -2 \tilde g) +
\tilde R((\tilde f + y \tilde g + z\tilde h)'_z -2 \tilde h))\tilde\tau .
\label{equality}\end{multline}
Define $\tilde r = \tilde f + y \tilde g + z \tilde h \in \mathcal{O}(C)$.
Now, the expression \eqref{equality} can be written as
\[
( \tilde P \tilde r'_x +  \tilde Q ( \tilde r'_y -2 \tilde g) +  \tilde R( \tilde r'_z -2 \tilde h))\tilde\tau .
\]
For any $F,G,H \in \Bbbk[x,y,z]$ there exist $ \tilde r, \tilde g, \tilde h $ such that $F =  \tilde r'_x$, $G =  \tilde r'_y - 2 \tilde g$ and $H= \tilde r'_z - 2 \tilde h$.
Hence, 
\begin{eqnarray*}\nonumber
  & \left\{ ( \tilde P \tilde r'_x +   \tilde Q ( \tilde r'_y -2 \tilde g) +  \tilde R( \tilde r'_z -2 \tilde h))\tilde\tau \mid \tilde r, \tilde g, \tilde h \in \Bbbk[x,y,z] \right\} =  \\ & 
\left\{( \tilde PF +  \tilde QG+ \tilde RH)\tilde\tau \mid F,G,H \in \Bbbk[x,y,z] \right\} = (\tilde P,\tilde Q, \tilde R) \tilde\tau.
 \end{eqnarray*}
This proves Claim \ref{claim11}.

We claim now that any vector field from $\Ve(C)$ can be written as the sum
\begin{equation}
    [\tau,f\tau] +  [\pi(y)\tau,g\tau] +  [\pi(z)\tau,h\tau]
\end{equation}
for some regular functions $f,g,h \in \mathcal{O}(C)$.
Indeed,
 $(\tilde P,\tilde Q, \tilde R)$ is preserved by $\tilde \tau$, hence $\pi((\tilde P,\tilde Q, \tilde R))$ is preserved by $\tau$.
 Whence $\pi((\tilde P,\tilde Q, \tilde R))\tau$ is the ideal in $\Ve(C)$.
 Therefore, because of simplicity of $\Ve(C)$ we have
\[ \pi_*((\tilde P,\tilde Q, \tilde R)\tau) =\Ve(C) \text{ or equivalently }  \pi((\tilde P,\tilde Q, \tilde R)) =\mathcal{O}(C).\]
Finally, by Claim \ref{claim11} we have
\[\left\{ \pi_*([\tilde\tau,\tilde f\tilde\tau])  
 +  \pi_*([y\tilde\tau,\tilde g\tilde\tau]) +  \pi_*([z\tilde\tau,\tilde h\tilde\tau]) \mid \tilde f,\tilde g, \tilde h \in \Bbbk[x,y,z]  \right\} = \pi_*((\tilde P,\tilde Q, \tilde R) \tau) = \Ve(C).
 \]
Thus, the first statement of the theorem follows as $\pi_*$ is a homomorphism of Lie algebras. 

If $C$ is a plane curve, then, similarly as above, $\Ve(C) = \mathcal{O}(C) \cdot \tau$,
$\mathcal{O}(C) \simeq \Bbbk[x,y]/I$, where $I \subset \Bbbk[x,y]$ is an ideal and we have the natural surjections
\[\pi\colon\Bbbk[x,y] \twoheadrightarrow \Bbbk[x,y]/I\]
and 
\[\pi_*\colon \left\{ \nu \in \Der\Bbbk[x,y]=\Ve(\mathbb{A}^2) \mid \nu(I) \subset I \right\} \twoheadrightarrow \Ve(C).\] Then
$\tau$ is the image of a derivation $\tilde \tau =\tilde P\dx + \tilde Q \dy$ that preserves the ideal $I$. Further,
define $P,Q \in \mathcal{O}(C)$, $P=\pi(\tilde{P})$, $Q=\pi(\tilde{Q})$.
The second statement of the theorem follows if we prove that
any vector field of $\Ve(C)$ can be written as the sum
\begin{equation}\label{formulaplanecurve}
    [\tau,f\tau] +  [\pi(y)\tau,g\tau]
\end{equation}
for some regular functions $f,g \in \mathcal{O}(C)$.
Similarly as above \eqref{formulaplanecurve} follows from the next equality
\[ \left\{ [\tilde\tau,\tilde f\tilde\tau]  
 +  [y\tilde\tau,\tilde g\tilde\tau] \mid \tilde f,\tilde g, \in \Bbbk[x,y]  \right\} = (\tilde P,\tilde Q) \tilde\tau
 \]
 which is proved analogously as Claim \ref{claim11}.
\end{proof}

 \begin{definition}\label{defoneplace} An irreducible smooth affine curve $C$ is said to have \emph{a unique place at infinity} if it is equal to the complement of a single closed point in a smooth projective curve $\bar{C}$.
 \end{definition}

\begin{proof}[Proof of Corollary \ref{brackethyperbolic}]
By \cite[Theorem A]{DKR21} the bracket width of $\Ve(C)$ is strictly greater than one. 
Since $C$ is a smooth irreducible plane affine curve, its tangent sheaf is trivial.
Now by Theorem A the width of $\Ve(C)$ is less or equal than two which proves the claim.
\end{proof}



\section{Proof of Theorem B}

\begin{proof}[Proof of Theorem B]
Denote by $\mathcal{K}(C)$ the field of fractions of $\mathcal{O}(C)$. Then 
\begin{equation}\label{tensorproduct}
\mathcal{K}(C) \otimes_{\mathcal{O}(C)} \Der(\mathcal{O}(C)) \simeq
\Der(\mathcal{K}(C))
\end{equation}
and the latter is known to be a free $\mathcal{K}(C)$-module of rank equal to   $\dim C = 1$.
By \eqref{tensorproduct} $\Ve(C) =\Der\mathcal{O}(C)\subset \Der\mathcal{K}(C) = \mathcal{K}(C)\tau$, where
 $\tau \in  \Ve(C)$ is a  global vector field.
Hence,  $\Ve(C)=\mathcal{R}(C) \cdot \tau$ for a certain
$\mathcal{O}(C)$-module $\mathcal{R}(C) \subset \mathcal{K}(C)$.
As a consequence, $\Ve(C_{f})=\mathcal{R}(C) [f^{-1}]\tau$.  
 Now,
 for $a,b \in \mathcal{R}(C)$, $\frac{a}{f^k} \tau, \frac{b}{f^k} \tau \in \Ve(C_{f}) = \mathcal{R}(C)[f^{-1}] \tau$ and we have:
\begin{equation}\label{devidedcommutator}
  \left[\frac{a}{f^k} \tau,\frac{b}{f^k}\tau\right]=\left(   \frac{a}{f^k}\tau(\frac{b}{f^k}) - \frac{b}{f^k}\tau(\frac{a}{f^k}) \right)\tau=   
  \frac{1}{f^{2k}}\left[a,b \right]\tau. 
\end{equation}
Further,
for any $h \in \mathcal{R}(C)[f^{-1}]$ there exists $g \in \mathcal{R}(C)$ such that 
\[h=\frac{g}{f^{2k}}\]
for some $k \in \mathbb{N}$.
Assume that
\[g\tau=[a_1\tau,b_1\tau]+\dots+[a_n\tau,b_n\tau].\]
Then using  \eqref{devidedcommutator} we have
\[h\tau = \frac{g}{f^{2k}} \tau =\left[\frac{a_1}{f^k}\tau,\frac{b_1}{f^k}\tau\right]+\dots+\left[\frac{a_n}{f^k}\tau,\frac{b_n}{f^k}\tau\right].\]
This completes the proof.
\end{proof}

\begin{proof}[Proof of Corollary \ref{bracketrationalcurve}]
Let us recall that every rational smooth affine curve $C$ is isomorphic to
$\mathbb{A}^1 \setminus \Lambda$, where $\Lambda$ is a finite set of $r \ge 0$ points.
In particular, it admits a closed embedding into $\mathbb{A}^2$. Indeed, $C$ can also
be seen as the complement in $\mathbb{A}^1$ of a finite number (possibly zero) of points
and we can consider the closed embedding $\mu \colon C \to \mathbb{A}^2$ given by $x \mapsto (x, \frac{1}{f(x)} )$,
where $f \in \Bbbk[x]$ is a polynomial whose roots are exactly the removed points.
Note that the image of $\mu$ is the curve of $\mathbb{A}^2$ defined by the equation $f(x)y
= 1$.

 By \cite[Proposition 1]{DKR21} the bracket width of $\mathrm{Vec}(\mathbb{A}^1)$ is one. By Theorem B the bracket width of $\Ve(\mathbb{A}^1 \setminus \Lambda)$ 
 is smaller than or equal to 
  the bracket width of  $\Ve(\mathbb{A}^1)$ which is one. The proof follows.
\end{proof}



\medskip

\noindent{\it Acknowledgements}. We thank Adrien Dubouloz and Boris Kunyavskii for useful discussions.


\end{document}